\documentclass[11pt]{amsart}
\usepackage[margin=1.15in]{geometry}
\usepackage{amsmath,amssymb,amsthm}
\usepackage{tikz}
\usepackage{booktabs}
\usepackage{enumitem}
\usepackage[colorlinks=true,linkcolor=blue!50!black,citecolor=blue!50!black,urlcolor=blue!50!black]{hyperref}

\newtheorem{theorem}{Theorem}[section]
\newtheorem{proposition}[theorem]{Proposition}
\newtheorem{lemma}[theorem]{Lemma}
\newtheorem{corollary}[theorem]{Corollary}
\newtheorem{question}[theorem]{Question}
\newtheorem*{thmA}{Result A}
\newtheorem*{thmB}{Result B}
\newtheorem*{thmC}{Result C}
\theoremstyle{definition}
\newtheorem{definition}[theorem]{Definition}
\newtheorem{construction}[theorem]{Construction}
\newtheorem{example}[theorem]{Example}
\theoremstyle{remark}
\newtheorem{remark}[theorem]{Remark}

\newcommand{\Z}{\mathbb{Z}}
\newcommand{\R}{\mathbb{R}}
\newcommand{\word}[1]{(\,#1\,)}
\newcommand{\Tp}{\mathsf{T}^{+}}
\newcommand{\Tm}{\mathsf{T}^{-}}
\newcommand{\T}{\mathsf{T}}
\newcommand{\X}{\mathsf{X}}
\newcommand{\Rot}{\mathsf{R}}
\newcommand{\Sym}{\mathsf{S}}
\newcommand{\M}{\mathcal{M}}

\title{Some petal diagrams of the unknot are hard}
\author{Alexei Vernitski}
\address{[Address]}
\email{[email]}
\subjclass[2020]{57K10}
\keywords{petal diagram, petal number, unknot, Reidemeister moves, arc presentation, grid diagram, monotonic simplification}

\begin{document}

\begin{abstract}
A petal diagram of a knot is a projection with a single multi-crossing and no nested loops; it is encoded by a permutation of the heights of the strands through the multi-crossing. Colton, Glover, Hughes and Sandberg proved a Reidemeister-type theorem for petal diagrams: two petal permutations represent the same knot if and only if they are related by trivial petal additions and deletions and by crossing exchanges. We ask whether every petal diagram of the unknot can be reduced to the one-petal diagram without ever increasing the number of petals, in analogy with Dynnikov's monotonic simplification theorem for rectangular diagrams. By an exhaustive, certified computer search we show that this is true for diagrams with at most $7$ petals and false for $9$ petals. Of the $40320$ petal diagrams with $9$ petals, $24992$ represent the unknot, and exactly $108$ of them are \emph{hard}: none of them admits a crossing exchange or a trivial petal deletion, even if two natural petal-number-preserving symmetries are allowed. Up to these symmetries and mirror image there are three hard diagrams. Two of them can be untangled by passing through $11$ petals; the third cannot be untangled through diagrams with at most $11$ petals, but can through $13$. We explain why the phenomenon differs from the rectangular case: a petal diagram is an arc presentation whose cyclic order of pages is determined by the order of its vertices on the binding, and no elementary move of Cromwell and Dynnikov preserves this rigid structure.
\end{abstract}

\maketitle

\section{Introduction}

A \emph{petal diagram} (or petal projection) of a knot $K$ is a projection of $K$ to a plane whose only multiple point is a single multi-crossing through which every strand passes straight, and whose complement of the multi-crossing consists of $2n+1$ loops, the \emph{petals}, none of which is nested inside another. Adams et al.\ \cite{Adams1} introduced petal diagrams and proved that every knot has one. The heights of the $2n+1$ strands through the multi-crossing, listed in the order in which the knot traverses them, form a permutation of $\{0,1,\dots,2n\}$ which determines the knot; see Figure~\ref{fig:examples}. The minimal number of petals is the \emph{petal number} $p(K)$ \cite{Adams1,Adams2}. Petal diagrams also underlie the Petaluma model of random knots \cite{EZ,EHLN1,EHLN2}, and petal grid diagrams have been used to compute petal numbers of torus knots \cite{LeeLee}.

Since many permutations represent the same knot, it is natural to ask for a set of combinatorial moves on petal permutations playing the role of the Reidemeister moves. Colton, Glover, Hughes and Sandberg \cite{CGHS} defined two such moves, \emph{trivial petal additions} (with their inverses, deletions) and \emph{crossing exchanges}, and proved that they connect any two petal permutations of the same knot. A trivial petal addition increases the number of petals by two; a crossing exchange preserves it.

This raises a monotonicity question. For classical diagrams it is well known that untangling an unknot diagram by Reidemeister moves may require temporarily increasing the number of crossings; such diagrams are called \emph{hard} \cite{Goeritz,Ochiai,KL,KL2,HK,BCLMMSSS,ABDEJLTZ}. For rectangular (grid) diagrams the situation is opposite: Dynnikov \cite{Dynnikov} proved that every rectangular diagram of the unknot can be reduced to the trivial one by exchange moves and destabilizations alone, i.e.\ without increasing the complexity. A similar monotonicity holds for closed braid representatives of the unlink, by work of Birman and Menasco \cite{BM}. Where do petal diagrams fit?

\begin{definition}\label{def:hard}
Let $\M$ be a set of moves on petal permutations, each of which preserves the knot type. A petal permutation $p$ of the unknot with $N$ petals is \emph{$\M$-hard} if there is no sequence of moves in $\M$ from $p$ to the one-petal permutation $\word{0}$ all of whose intermediate permutations have at most $N$ petals. The \emph{excess} $e_\M(p)$ is the smallest $E\ge 0$ such that there is such a sequence all of whose intermediate permutations have at most $N+E$ petals.
\end{definition}

The excess is the petal analogue of the number of ``extra crossings'' studied in \cite{BCLMMSSS}. Throughout, $\M$ consists of the moves of \cite{CGHS} together with two symmetries which are evidently isotopies and preserve the number of petals:
\[
\M=\{\T^{\pm}, \X, \Rot, \Sym\},
\]
where $\T^{\pm}$ are trivial petal additions/deletions, $\X$ are crossing exchanges, $\Rot$ is the cyclic rotation of heights and $\Sym$ is the rotation of $\R^3$ by $\pi$ about a line in the projection plane. All moves are defined precisely in Section~\ref{sec:moves}. Since $N$ is always odd and only $\T^\pm$ changes $N$ (by $\pm2$), the excess is always even. Enlarging the move set can only decrease hardness, so including $\Rot$ and $\Sym$ makes our hardness statements stronger.

Our results are as follows. We state them as \emph{Results} rather than theorems to stress that their proofs are entirely computational: each consists of an exhaustive, certified computer search, described in Section~\ref{sec:computation}, and not of a conceptual argument.

\begin{thmA}
Every petal permutation of the unknot with at most $7$ petals can be reduced to $\word{0}$ by crossing exchanges and trivial petal deletions only. In particular no such permutation is $\M$-hard.
\end{thmA}

\begin{thmB}
Among the $8!=40320$ petal permutations with $9$ petals (counted up to cyclic rotation), $24992$ represent the unknot and exactly $108$ are $\M$-hard. They form six orbits of size $18$ under the dihedral group generated by $\Rot$ and $\Sym$, and three orbits up to mirror image, with representatives
\[
\alpha=\word{0\,2\,4\,1\,5\,7\,3\,8\,6},\qquad
\beta=\word{0\,2\,5\,3\,8\,1\,7\,4\,6},\qquad
\gamma=\word{0\,2\,4\,7\,3\,8\,6\,1\,5}.
\]
Each of the $108$ permutations is \emph{rigid}: no crossing exchange and no trivial petal deletion applies to it. Consequently $9$ is the smallest number of petals of an $\M$-hard petal permutation.
\end{thmB}

\begin{thmC}
The hard permutations in the orbits of $\alpha$, $\beta$ and their mirror images have excess $2$. The hard permutations in the orbit of $\gamma$ and of its mirror image have excess $4$: the set of all permutations reachable from $\gamma$ by moves in $\M$ through permutations with at most $11$ petals is finite, has exactly $3186$ elements ($18$ with $9$ petals and $3168$ with $11$ petals), and contains none with fewer than $9$ petals.
\end{thmC}

Explicit untangling sequences realizing the excess are given in Section~\ref{sec:results}. Remarkably, they use only the moves $\T^\pm$ and $\X$ of \cite{CGHS}, without $\Rot$ or $\Sym$. If one restricts to the moves $\{\T^\pm,\X\}$ of \cite{CGHS}, Result~A still holds, and the number of hard $9$-petal permutations rises to $264$ (Remark~\ref{rem:TX}).

A robust consequence of Result~C is that hardness of $\gamma$ cannot be removed by adding ``short'' composite moves: any move obtained by composing moves of $\M$ through diagrams with at most two more petals than the starting diagram still fails to untangle $\gamma$ monotonically (Corollary~\ref{cor:robust}).

As said, the proofs of Results A--C are entirely computational. Knot types are certified in both directions: unknottedness by an explicit simplification to a crossingless diagram, and knottedness by a non-trivial Alexander polynomial. The search itself is exhaustive. The code is provided as an ancillary file (Section~\ref{sec:computation}). We emphasize that our results use only the (comparatively easy) fact that each individual move preserves the knot type; they do not depend on the completeness part of the theorem of \cite{CGHS}.

In Section~\ref{sec:pages} we explain the contrast with Dynnikov's theorem. A petal diagram is a special arc presentation (Adams et al.\ \cite[\S3]{Adams1} call these \emph{petal arc presentations}). In a general arc presentation, the cyclic order of the vertices on the binding and the cyclic order of the pages are independent, and Dynnikov's exchange moves permute either of them. In a petal arc presentation the page in which each arc lies is forced by the position of the arc along the knot. We show that none of the elementary moves of Cromwell and Dynnikov preserves this structure (Proposition~\ref{prop:noexchange}); a crossing exchange is a composite of two vertex swaps, neither of which is an exchange move by itself.

\begin{figure}[t]
\centering
\begin{tikzpicture}[scale=0.62,line width=0.7pt]
\draw (0,0) .. controls (72.00:4.05) and (108.00:4.05) .. (0,0);
\draw (0,0) .. controls (144.00:4.05) and (180.00:4.05) .. (0,0);
\draw (0,0) .. controls (216.00:4.05) and (252.00:4.05) .. (0,0);
\draw (0,0) .. controls (288.00:4.05) and (324.00:4.05) .. (0,0);
\draw (0,0) .. controls (360.00:4.05) and (396.00:4.05) .. (0,0);
\node[font=\scriptsize] at (66.50:1.55) {3};
\node[font=\scriptsize] at (282.50:1.55) {1};
\node[font=\scriptsize] at (138.50:1.55) {4};
\node[font=\scriptsize] at (354.50:1.55) {2};
\node[font=\scriptsize] at (210.50:1.55) {0};
\fill (0,0) circle (1.6pt);
\end{tikzpicture}
\qquad\qquad
\begin{tikzpicture}[scale=0.62,line width=0.7pt]
\draw (0,0) .. controls (80.00:4.05) and (100.00:4.05) .. (0,0);
\draw (0,0) .. controls (120.00:4.05) and (140.00:4.05) .. (0,0);
\draw (0,0) .. controls (160.00:4.05) and (180.00:4.05) .. (0,0);
\draw (0,0) .. controls (200.00:4.05) and (220.00:4.05) .. (0,0);
\draw (0,0) .. controls (240.00:4.05) and (260.00:4.05) .. (0,0);
\draw (0,0) .. controls (280.00:4.05) and (300.00:4.05) .. (0,0);
\draw (0,0) .. controls (320.00:4.05) and (340.00:4.05) .. (0,0);
\draw (0,0) .. controls (360.00:4.05) and (380.00:4.05) .. (0,0);
\draw (0,0) .. controls (400.00:4.05) and (420.00:4.05) .. (0,0);
\node[font=\scriptsize] at (74.50:1.55) {0};
\node[font=\scriptsize] at (274.50:1.55) {2};
\node[font=\scriptsize] at (114.50:1.55) {4};
\node[font=\scriptsize] at (314.50:1.55) {7};
\node[font=\scriptsize] at (154.50:1.55) {3};
\node[font=\scriptsize] at (354.50:1.55) {8};
\node[font=\scriptsize] at (194.50:1.55) {6};
\node[font=\scriptsize] at (394.50:1.55) {1};
\node[font=\scriptsize] at (234.50:1.55) {5};
\fill (0,0) circle (1.6pt);
\end{tikzpicture}
\caption{Left: a $5$-petal diagram of the trefoil with petal permutation $\word{3\,1\,4\,2\,0}$. Right: the hard $9$-petal diagram $\gamma=\word{0\,2\,4\,7\,3\,8\,6\,1\,5}$ of the unknot. Each straight strand through the centre carries its height, written next to the edge where the strand enters a petal on its clockwise side. Following the knot, one meets the strands in the cyclic order of the permutation and passes from each petal to the petal $n+1$ steps further counterclockwise.}
\label{fig:examples}
\end{figure}

\subsection*{Organization}
Section~\ref{sec:petal} fixes conventions and describes petal permutations as arc presentations. Section~\ref{sec:moves} gives the precise definitions of all moves used, and explains how the Reidemeister moves are encoded by them in \cite{CGHS}. Section~\ref{sec:pages} compares the role of pages in arc presentations, closed braids and petal diagrams. Section~\ref{sec:computation} describes the computation, Section~\ref{sec:results} the results, and Section~\ref{sec:questions} lists open questions.

\section{Petal diagrams as arc presentations}\label{sec:petal}

Fix $N=2n+1$. A \emph{petal word} is a bijection $p\colon \Z/N\to\{0,1,\dots,N-1\}$, written $p=\word{p_0\,p_1\,\cdots\,p_{N-1}}$, considered up to cyclic shift of the indices. There are $(N-1)!$ petal words with $N$ petals. When we list words we use the lexicographically smallest cyclic rotation, which always begins with $0$, and index positions from $0$.

\begin{construction}\label{con:arc}
Use cylindrical coordinates $(r,\theta,z)$ on $\R^3$, and let the $z$-axis be the \emph{binding}. For $k\in\Z/N$ let $H_k$ be the open half-plane $\theta=2\pi k/N$, the $k$-th \emph{page}. Put
\[
c_j \;=\; j(n+1)\bmod N\qquad (j\in\Z/N).
\]
Let $v_j$ be the point of the binding at height $z=p_j$, and let $A_j$ be a simple arc in $H_k\cup\{\text{binding}\}$ with $k=c_j$, joining $v_j$ to $v_{j+1}$ and meeting the binding only in its endpoints. The \emph{knot of $p$} is $K(p)=\bigcup_j A_j$.
\end{construction}

Since each page contains exactly one arc, and an arc in a half-plane with given endpoints on its boundary is unique up to isotopy, $K(p)$ is well defined up to isotopy. It is an arc presentation in the sense of Cromwell \cite{Cromwell} with $N$ arcs. The pages are visited in the order $c_0,c_1,c_2,\dots$, that is, in steps of $n+1$.

\begin{lemma}\label{lem:petalarc}
$K(p)$ is isotopic to the knot represented by the petal diagram with petal permutation $p$, the heights being read in the order in which the knot traverses the strands through the multi-crossing.
\end{lemma}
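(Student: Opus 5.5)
The plan is to build an explicit isotopy between the petal diagram and $K(p)$, carried out directly in three-space. I would start from the standard model of a petal diagram in the spirit of Adams et al.\ \cite{Adams1}.

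Place the multi-crossing on the $z$-axis, viewed from above, so that the projection is along the $z$-direction onto the plane $z=0$. The $N$ straight strands through the crossing are $N$ short segments $s_0,\dots,s_{N-1}$. They are listed in the order the knot traverses them. Strand $s_j$ passes over the centre at height $p_j$ in a direction $\phi_j$. The key combinatorial fact, recorded in the caption of Figure~\ref{fig:examples}, is that the knot passes from each petal to the one $n+1$ steps further around. Consequently the directions satisfy $\phi_{j+1}=\phi_j+\pi+\pi/N$ up to a global rotation, i.e.\ an advance of $(n+1)\cdot 2\pi/N$ modulo $2\pi$. The petal joining the end of $s_j$ to the start of $s_{j+1}$ then lies in an angular sector of width $2\pi/N$ around some direction $\psi_j$. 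Since petals are not nested and each has its own sector, $\psi_j$ is proportional to $c_j=j(n+1)\bmod N$, up to a fixed offset and orientation convention.

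The isotopy itself has two steps.

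\textbf{Step 1: shrink the strands.} Each straight strand $s_j$ is shrunk vertically and horizontally toward the point $v_j=(0,0,p_j)$ of the $z$-axis. After this, the knot consists of the points $v_j$ together with, for each $j$, a petal loop leaving $v_j$ and returning to $v_{j+1}$. The loop's projection lies in the sector around $\psi_j$, and its height varies monotonically from $p_j$ to $p_{j+1}$.

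\textbf{Step 2: straighten each petal into a page.} The $j$-th petal is pushed, within the vertical wedge over its sector, into the half-plane $\theta=\psi_j$. This is possible because the wedges over distinct sectors are disjoint away from the axis. So the petal loops can be isotoped independently, each inside its own wedge union the binding, keeping their endpoints on the axis fixed. Inside a wedge, any arc with given endpoints on the axis is isotopic rel endpoints to an arc in the central half-plane: project radially onto $\theta=\psi_j$, which is injective on an arc meeting each level $z$ at most once. The result is an arc $A_j$ in the page $H_{c_j}$ joining $v_j$ to $v_{j+1}$, which is exactly Construction~\ref{con:arc}.

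The main obstacle is Step 1. Shrinking the strands must not make strands pass through each other or through petals. This is handled because strand $s_j$ sits at its own height $p_j$ near the axis, and the heights are distinct, so the shrinking moves within disjoint horizontal slabs near the axis. The petal ends must be dragged along during the shrinking, and the wedges they occupy are disjoint from the other slabs except near the axis.

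The remaining point is to check that the indexing convention matches. The claim is that ``the $j$-th petal lies in sector $c_j$'' rather than $-c_j$ or a shifted version. A global rotation or reflection of the angle coordinate does not change the isotopy class (a reflection $\theta\mapsto-\theta$ is realised by rotating $\R^3$ by $\pi$ about the $z$-axis composed appropriately). So it suffices to confirm, from the step-$(n+1)$ rule, that consecutive pages differ by $n+1$.
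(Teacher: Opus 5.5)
The geometric core of your proposal matches the paper's argument. You cut $K$ into $N$ arcs from $v_j$ to $v_{j+1}$, confine each to its own vertical wedge, and flatten each into its half-plane. Your explicit isotopy $(r,\theta,z)\mapsto(r,(1-t)\theta+t\psi_j,z)$ makes precise a flattening the paper only asserts.

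\textbf{The gap: index matching.} This is the actual content of the lemma, and your treatment of it has two problems.

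First, your claim that a reflection $\theta\mapsto-\theta$ does not change the isotopy class is false. With $z$ fixed, this map reverses the orientation of $\R^3$ and yields the mirror image. No rotation about the $z$-axis realizes it. The only orientation-preserving way to reverse $\theta$ also flips $z$; that is the move $\Sym$, which complements the heights. Concretely, pages $-c_j=jn$ in place of $c_j=j(n+1)$ give the mirror of $K(p)$ (cf.\ Remark~\ref{rem:mirror}). So you must show the step is $+(n+1)$ in the direction of increasing $\theta$ (diagram viewed from $z>0$), for the traversal along which the heights are read. Only the offset (a rotation about the binding) and the cyclic re-indexing of $j$ are free.

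Second, you quote the step-$(n+1)$ rule from the figure caption and deduce $\phi_{j+1}=\phi_j+\pi+\pi/N$ from it. The implication must run the other way, from the straightness of the strands. The paper's one-line derivation does this: exiting petal $k$ along its counterclockwise edge at angle $2\pi k/N+\pi/(2N)$, the strand passes straight through the centre. It emerges at angle $2\pi k/N+\pi/(2N)+\pi=2\pi(k+n+1)/N-\pi/(2N)$, which is the clockwise edge of petal $k+n+1$. For the opposite traversal the same computation gives step $n$ together with the reversed word. That is exactly the mirror ambiguity your reflection claim erases.

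\textbf{Step~1.} This step is unnecessary, and its justification is off. The strands already pass through $v_j=(0,0,p_j)$. So, as the paper does, take the arc from $v_j$ to $v_{j+1}$ to be the petal together with its two bounding half-strands. Collisions are not prevented by strands lying in disjoint horizontal slabs, since the petals sweep through all intermediate heights. They are prevented because every motion stays inside one wedge. Your radial projection still works on these longer arcs: $(r,z)$ is injective on them. The two half-strands sit at the distinct heights $p_j\neq p_{j+1}$, and the loop between them can be given strictly monotone height. That choice is legitimate because a petal diagram prescribes heights only at the multi-crossing.
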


\begin{proof}
Place the multi-crossing of a petal diagram at the origin of the projection plane, viewed along the $z$-axis, with petal $k$ centred at angle $2\pi k/N$ and bounded by two radial edges at angles $2\pi k/N\pm\pi/(2N)$. The strand at height $h$ is a straight segment through the origin. A line through the origin leaving petal $k$ along its counterclockwise edge arrives at angle $2\pi k/N+\pi/(2N)+\pi = 2\pi(k+n+1)/N-\pi/(2N)$. This is the clockwise edge of petal $k+n+1$. Hence, following the knot, consecutive petals differ by $n+1$, which matches the page sequence $c_j$.

The part of the knot between two consecutive passes through the multi-crossing, at heights $p_j$ and $p_{j+1}$, lies in a thin wedge around the angle $2\pi c_j/N$ that no other part of the knot enters, except along the $z$-axis at other heights. An isotopy supported in these wedges flattens each petal, together with the two half-strands bounding it, into the half-plane $H_{c_j}$. The result is $K(p)$. The converse construction is the same isotopy run backwards.
\end{proof}

\begin{remark}
This is the petal arc presentation of \cite[\S3]{Adams1}. The convention agrees with that of \cite{CGHS}. For instance, their permutations $\word{3\,1\,4\,2\,0}$ and $\word{6\,4\,2\,0\,3\,5\,1}$ give the trefoil and the figure-eight knot, and both of their moves preserve the knot type of $K(p)$ in all of the more than $1200$ random instances we tested. Conventions differing by the direction of the height axis, or of the traversal, replace $K(p)$ by its mirror image. Since the unknot is amphichiral, and since mirroring maps our move set $\M$ to itself (Remark~\ref{rem:mirror}), none of our results depends on these conventions.
\end{remark}

\begin{remark}\label{rem:grid}
Blowing up the binding turns $K(p)$ into a grid (rectangular) diagram \cite{Cromwell,Dynnikov,OSS}. Column $k$ is page $k$ and row $h$ is height $h$. The vertical segment in column $c_j$ joins rows $p_j$ and $p_{j+1}$, and the horizontal segment in row $p_{j+1}$ joins columns $c_j$ and $c_{j+1}$. Vertical segments cross over horizontal ones. Figure~\ref{fig:grid} shows the grid diagram of $\gamma$. This is the form we use in the computations.
\end{remark}

\begin{figure}[t]
\centering
\begin{tikzpicture}[scale=0.42,line width=0.8pt]
\draw[gray!35,line width=0.3pt] (-0.5,-0.5) grid (8.5,8.5);
\draw (0,2) -- (5,2);
\draw (5,4) -- (1,4);
\draw (1,7) -- (6,7);
\draw (6,3) -- (2,3);
\draw (2,8) -- (7,8);
\draw (7,6) -- (3,6);
\draw (3,1) -- (8,1);
\draw (8,5) -- (4,5);
\draw (4,0) -- (0,0);
\draw[white,line width=4pt] (0,0) -- (0,2);
\draw (0,0) -- (0,2);
\draw[white,line width=4pt] (5,2) -- (5,4);
\draw (5,2) -- (5,4);
\draw[white,line width=4pt] (1,4) -- (1,7);
\draw (1,4) -- (1,7);
\draw[white,line width=4pt] (6,7) -- (6,3);
\draw (6,7) -- (6,3);
\draw[white,line width=4pt] (2,3) -- (2,8);
\draw (2,3) -- (2,8);
\draw[white,line width=4pt] (7,8) -- (7,6);
\draw (7,8) -- (7,6);
\draw[white,line width=4pt] (3,6) -- (3,1);
\draw (3,6) -- (3,1);
\draw[white,line width=4pt] (8,1) -- (8,5);
\draw (8,1) -- (8,5);
\draw[white,line width=4pt] (4,5) -- (4,0);
\draw (4,5) -- (4,0);
\node[font=\tiny,left] at (-0.5,0) {0};
\node[font=\tiny,left] at (-0.5,1) {1};
\node[font=\tiny,left] at (-0.5,2) {2};
\node[font=\tiny,left] at (-0.5,3) {3};
\node[font=\tiny,left] at (-0.5,4) {4};
\node[font=\tiny,left] at (-0.5,5) {5};
\node[font=\tiny,left] at (-0.5,6) {6};
\node[font=\tiny,left] at (-0.5,7) {7};
\node[font=\tiny,left] at (-0.5,8) {8};
\node[font=\tiny,below] at (0,-0.5) {0};
\node[font=\tiny,below] at (1,-0.5) {1};
\node[font=\tiny,below] at (2,-0.5) {2};
\node[font=\tiny,below] at (3,-0.5) {3};
\node[font=\tiny,below] at (4,-0.5) {4};
\node[font=\tiny,below] at (5,-0.5) {5};
\node[font=\tiny,below] at (6,-0.5) {6};
\node[font=\tiny,below] at (7,-0.5) {7};
\node[font=\tiny,below] at (8,-0.5) {8};
\end{tikzpicture}
\caption{The grid diagram of the petal word $\gamma=\word{0\,2\,4\,7\,3\,8\,6\,1\,5}$ (columns $=$ pages, rows $=$ heights; vertical segments pass over horizontal ones). The knot visits the columns in the order $0,5,1,6,2,7,3,8,4$, which is forced by the petal structure. Only the row data, i.e.\ the word, is free.}
\label{fig:grid}
\end{figure}

\section{The moves}\label{sec:moves}

We now define all moves precisely. Let $p=\word{p_0\,\cdots\,p_{2n}}$ be a petal word.

\subsection{Trivial petal addition and deletion \texorpdfstring{($\T^\pm$)}{(T)}}
For $m\in\Z$ let $g_m(a)=a$ if $a<m$ and $g_m(a)=a+2$ if $a\ge m$. A \emph{trivial petal addition} \cite[\S2.1]{CGHS} replaces $p$ by
\[
\begin{gathered}
\word{g_m(p_0)\,\cdots\,g_m(p_j)\;\;m\;\;m{+}1\;\;g_m(p_{j+1})\,\cdots\,g_m(p_{2n})}\\
\text{or}\quad
\word{g_m(p_0)\,\cdots\,g_m(p_j)\;\;m{+}1\;\;m\;\;g_m(p_{j+1})\,\cdots\,g_m(p_{2n})}
\end{gathered}
\]
for some position $j$ and some $0\le m\le 2n+1$. The number of petals increases by $2$. The inverse, a \emph{trivial petal deletion} $\Tm$, applies exactly when two cyclically adjacent letters of $p$ are consecutive integers $m,m+1$ (in either order). One deletes them and lowers all letters larger than $m+1$ by $2$.

In the arc presentation $K(p)$, two consecutive vertices at heights $m,m+1$ with no vertex between them are joined by a short arc. Shrinking it merges them into one vertex whose two arcs lie in pages $c_{j-1}$ and $c_{j+1}=c_{j-1}+2(n+1)=c_{j-1}+1$ of the presentation (here $N=2n+1$ is the number of petals before the deletion, and $2(n+1)\equiv1 \bmod N$). These are adjacent pages, so the two arcs can be rotated into one page. This is a direct way to see that $\T^\pm$ is an isotopy; a proof via stem diagrams is \cite[Cor.~5]{CGHS}.

\subsection{Crossing exchange \texorpdfstring{($\X$)}{(X)}}
Choose a \emph{basepoint position} $b\in\Z/N$. The remaining $2n$ positions are partitioned into $n$ cyclically consecutive pairs
\[
P_b=\bigl\{\{b+1,b+2\},\ \{b+3,b+4\},\ \dots,\ \{b+2n-1,b+2n\}\bigr\}.
\]
These are the left-pairs of \cite[\S2.2]{CGHS} for one choice of word representing $p$, and the right-pairs for another; letting $b$ vary covers both. For a pair $\Delta=\{i,i'\}\in P_b$ let $E(\Delta)=\{p_i,p_{i'}\}$ be its set of values. Suppose that there are integers $m$ and $w\ge m+2$ and two pairs $\Delta,\Delta'\in P_b$ with
\[
E(\Delta)=\{m,\,w+1\},\qquad E(\Delta')=\{m+1,\,w\},
\]
and such that for every other pair $\Lambda\in P_b$ the set $E(\Lambda)$ is either contained in or disjoint from the interval $[m+2,w-1]$. Then the \emph{crossing exchange} \cite[\S2.3]{CGHS} replaces $p$ by the word obtained by interchanging the letters $m\leftrightarrow m+1$ and $w\leftrightarrow w+1$. The order of the two letters inside $\Delta$ and $\Delta'$ is irrelevant. The basepoint letter $p_b$ never belongs to $\Delta$ or $\Delta'$. A crossing exchange preserves the number of petals and is an isotopy \cite[Lemma~6]{CGHS}.

In our tables we record a crossing exchange as $\X(b,m,w)$, where $b$ is the basepoint position in the word as displayed.

\begin{example}[{\cite[Fig.~4]{CGHS}}]
Let $p=\word{4\,2\,0\,6\,5\,1\,3}$ and $b=6$. Then $P_6=\{\{0,1\},\{2,3\},\{4,5\}\}$ with value sets $\{2,4\},\{0,6\},\{1,5\}$. With $m=1$, $w=4$, $\Delta=\{4,5\}$, $\Delta'=\{0,1\}$ the hypotheses hold, since $[3,3]$ is disjoint from $\{0,6\}$. Thus $\X(6,1,4)$ gives $\word{5\,1\,0\,6\,4\,2\,3}$.
\end{example}

In arc-presentation terms, interchanging two letters $h,h+1$ swaps two adjacent vertices on the binding. The nesting hypothesis ensures that the two swaps $m\leftrightarrow m+1$ and $w\leftrightarrow w+1$ undo each other's effect on the knot type. Neither swap alone is an isotopy in general; see Proposition~\ref{prop:noexchange}.

\subsection{Rotation of heights \texorpdfstring{($\Rot$)}{(R)}}
$\Rot(p)=\word{p_0+1\,\cdots\,p_{2n}+1}$, with letters taken modulo $N$.

\begin{lemma}
$K(\Rot(p))$ is isotopic to $K(p)$.
\end{lemma}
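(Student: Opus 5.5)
The plan is to pass to $S^3=\R^3\cup\{\infty\}$. There the binding together with $\infty$ becomes an unknotted circle $B$, and the pages $H_k$ become open disks bounded by $B$, so that $K(p)$ is an arc presentation in an open book of $S^3$. The point is that heights on the binding are only a \emph{linear} order, whereas on $B$ only their \emph{cyclic} order matters. The move $\Rot$ adds $1$ modulo $N$ to every height, which is a cyclic shift of the vertices around $B$. So it should be realized by an isotopy of $S^3$ that rotates $B$ along itself while preserving every page.

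Concretely, I would identify $S^3$ with $\{(z,w)\in\mathbb{C}^2:|z|^2+|w|^2=1\}$ so that the binding is $B=\{w=0\}$ and the page $H_k$ is $\{\arg w=2\pi k/N\}$. This identification can be chosen by a diffeomorphism of $\R^3\cup\{\infty\}$ that carries the $z$-axis to $B$ (with $\infty\mapsto$ a point of $B$) and carries each half-plane $\theta=\text{const}$ to the corresponding page. The vertices $v_j$ at heights $p_j$ then become points of $B$, in the cyclic order of their heights. Next I would consider the flow $\phi_t(z,w)=(e^{it}z,w)$. It maps each page $\{\arg w=\text{const}\}$ to itself and rotates $B$.

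The steps, in order, are as follows.
\begin{enumerate}[label=(\roman*)]
\item Using the isotopy of vertices along $B$ within their cyclic order (an arc in a page is unique up to isotopy rel its endpoints, as noted after Construction~\ref{con:arc}), move the $N$ vertices to equally spaced points $e^{2\pi i h/N}$ on $B$, where $h$ is the height. I would place $\infty$ between the vertices of heights $N-1$ and $0$.
\item Apply $\phi_t$ for $t\in[0,2\pi/N]$. Each arc stays in its own page, and the vertex of height $h$ moves to the position previously occupied by height $h+1 \bmod N$. The resulting knot is exactly the arc presentation with the same page assignment $c_j$ and with heights $p_j+1 \bmod N$. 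By the uniqueness of arcs in pages, this is $K(\Rot(p))$.
\item Conclude, using the standard fact that two knots in $\R^3$ are isotopic if and only if they are isotopic in $S^3$. This fact follows since any isotopy in $S^3$ can be perturbed to avoid a point, or equivalently since orientation-preserving diffeomorphisms of $S^3$ are isotopic to the identity.
\end{enumerate}

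The only delicate point is that during the rotation the vertex of height $N-1$ passes through $\infty$. This is why the argument must be carried out in $S^3$ rather than $\R^3$, and step~(iii) is what licenses it. Beyond that, I expect the main care to go into checking that the page identification in the model of $S^3$ is compatible with Construction~\ref{con:arc}. This amounts to two facts: the pages remain in the same cyclic order around $B$, and $\phi_t$ fixes each page rather than permuting them. Given these, no page indices $c_j$ change, and only the heights are shifted cyclically, as $\Rot$ requires.
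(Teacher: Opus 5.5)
Your proposal is correct and follows essentially the same route as the paper. Both compactify to $S^3$, where only the cyclic order of the vertices on the binding matters, and both realize $\Rot$ by pushing a vertex through $\infty$ while every arc stays in its own page. The difference is cosmetic: the paper slides only the top vertex through $\infty$, appealing to Cromwell's fact that an arc presentation in $S^3$ is determined by its two cyclic orders. You instead rotate the whole binding by the explicit page-preserving flow $\phi_t$, and you spell out the $S^3$-versus-$\R^3$ equivalence that the paper uses implicitly.
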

\begin{proof}
Compactify $\R^3$ to $S^3$. The binding becomes an unknotted circle and the pages become the pages of an open book with disk pages. An arc presentation in $S^3$ is determined up to isotopy by the cyclic order of its vertices on the binding circle and the cyclic order of its pages \cite{Cromwell}. Sliding the top vertex, at height $N-1$, up through $\infty$ and back from below, carrying its two arcs along in their pages, changes the linear order of the heights by the cyclic shift $h\mapsto h+1 \pmod N$. It leaves the pages unchanged.
\end{proof}

\subsection{Rotation by \texorpdfstring{$\pi$}{pi} \texorpdfstring{($\Sym$)}{(S)}}
$\Sym(p)=\word{N{-}1{-}p_{2n}\;\;N{-}1{-}p_{2n-1}\;\cdots\;N{-}1{-}p_{0}}$, i.e.\ reverse the word and complement every letter.

\begin{lemma}
$K(\Sym(p))$ is isotopic to $K(p)$.
\end{lemma}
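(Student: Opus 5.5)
The plan is to realize $\Sym$ by an explicit rigid motion of $\R^3$ applied to the arc presentation of Construction~\ref{con:arc}, followed by a rotation about the binding, and then to check that the image is exactly $K(\Sym(p))$. Since rigid motions are orientation-preserving isotopies of $\R^3$, this gives the lemma. The knot is unoriented, so reversing the direction of traversal costs nothing.

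Let $\rho$ be the rotation by $\pi$ about the line $\{y=0,\ z=(N-1)/2\}$, which is parallel to the projection plane; in coordinates $(x,y,z)\mapsto(x,-y,N-1-z)$. It preserves the binding and sends the vertex at height $h$ to the vertex at height $N-1-h$. In cylindrical coordinates it sends $\theta\mapsto-\theta$, hence page $H_k$ to page $H_{-k}$. So $\rho(K(p))$ is the union of the arcs $\rho(A_j)$. Each $\rho(A_j)$ lies in page $-c_j$ and joins the vertices at heights $N-1-p_j$ and $N-1-p_{j+1}$. Since an arc in a page with given endpoints is unique up to isotopy, it only remains to identify these data with those of $K(q)$, where $q=\Sym(p)$.

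By definition $q_i=N-1-p_{2n-i}=N-1-p_{-1-i}$, indices in $\Z/N$. In $K(q)$ the arc joining $v_i$ and $v_{i+1}$ must lie in page $c_i$. In $\rho(K(p))$ the arc joining the vertices at heights $q_i$ and $q_{i+1}$ is $\rho(A_{-2-i})$. It lies in page
\[
-c_{-2-i}=(i+2)(n+1)\equiv c_i+2(n+1)\equiv c_i+1 \pmod N,
\]
using $2(n+1)\equiv 1$. Thus every arc of $\rho(K(p))$ sits in the page one step beyond the one required for $K(q)$, uniformly in $i$. Composing $\rho$ with the rotation by $-2\pi/N$ about the $z$-axis moves each page $H_{k}$ to $H_{k-1}$ and fixes the binding pointwise. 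It therefore produces an arc presentation with the same vertices and with arc $i$ in page $c_i$, i.e.\ $K(q)$ up to isotopy within pages.

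The only step needing care is this index bookkeeping: matching the reversed traversal with the page sequence $c_j$. One must check that the page discrepancy is a constant shift rather than something depending on $i$. The computation above shows it is the constant $+1$, coming from $2(n+1)\equiv1 \pmod N$, and a constant shift is absorbed by a rotation about the binding.
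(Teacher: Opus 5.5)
Your proof is correct and follows the paper's argument. Both use a rotation by $\pi$ about a horizontal line, which sends $H_k$ to $H_{-k}$ and height $h$ to $N-1-h$, and then match indices with $\Sym(p)$. The differences are cosmetic: you put the axis at height $(N-1)/2$ rather than relabelling heights after rotating about the $x$-axis, and you absorb the constant page shift $+1$ by an explicit rotation about the binding, where the paper absorbs the equivalent discrepancy by reading $\Sym(p)$ up to cyclic shift.
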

\begin{proof}
Apply the rotation $\rho(x,y,z)=(x,-y,-z)$, by $\pi$ about the $x$-axis. It maps the height $h$ to $-h$, which after relabelling is $N-1-h$, and the page $H_k$ to $H_{-k}$. The arc $A_j$ in page $c_j=j(n+1)$ is mapped to an arc in page $-j(n+1)=c_{-j}$ joining heights $N-1-p_j$ and $N-1-p_{j+1}$. Hence $\rho(K(p))=K(q)$ with $q_{-j}=N-1-p_{j+1}$, which is $\Sym(p)$ up to cyclic shift.
\end{proof}

\begin{remark}\label{rem:mirror}
By the same computation, reversing the word (reflection $\theta\mapsto-\theta$) and complementing the letters (reflection $z\mapsto -z$) each give the mirror image. We verified this on the trefoil using the writhe of a minimal diagram. Mirroring conjugates each of $\T^\pm,\X,\Rot,\Sym$ to a move of the same type, so the set of $\M$-hard words and the excess are invariant under mirroring. One checks that $\Sym\Rot\Sym^{-1}=\Rot^{-1}$, so $\langle\Rot,\Sym\rangle$ is dihedral of order $2N$.
\end{remark}

By the theorem of \cite{CGHS}, $\Rot$ and $\Sym$ can be expressed through $\T^\pm$ and $\X$, but in general only through diagrams with more petals. This is why we add them explicitly.

\subsection{How the Reidemeister moves are encoded}\label{subsec:CGHS}
We recall the structure of the proof of the completeness theorem.

\begin{theorem}[{Colton--Glover--Hughes--Sandberg \cite[Thm.~1]{CGHS}}]\label{thm:CGHS}
Two petal permutations represent isotopic knots if and only if they are related by a finite sequence of trivial petal additions, trivial petal deletions and crossing exchanges.
\end{theorem}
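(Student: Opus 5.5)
The plan follows the architecture of \cite{CGHS}: reduce to Reidemeister's theorem through an intermediate diagram class. The easy direction, that each move is an isotopy, is already recorded above. For $\T^\pm$ this was argued directly in the arc presentation. For $\X$ it is \cite[Lemma~6]{CGHS}, which I would reprove by checking that the two vertex swaps $m\leftrightarrow m+1$ and $w\leftrightarrow w+1$, under the nesting hypothesis, amount to an isotopy of the arc presentation. The substantive direction is completeness.

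For completeness, I would pass through \emph{stem diagrams}, or equivalently übercrossing diagrams with a marked order of strands. The first step is to show that every knot diagram can be converted into a petal diagram by an explicit algorithm, essentially that of Adams et al.\ \cite{Adams1}. That algorithm first forms a single übercrossing by pulling all crossings into one point. It then removes nesting by successive twists, and each twist is encoded combinatorially as a sequence of $\T^\pm$ and $\X$ moves. The second step is to show that the output is well defined up to $\T^\pm$ and $\X$. The algorithm involves several choices: the base point, the order in which crossings are collapsed, and the direction of the twists. Changing any one of them should change the resulting petal word only by $\T^\pm$ and $\X$ moves. I would verify this choice by choice, with the basepoint position $b$ in the definition of $P_b$ absorbing the dependence on the base point.

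The third step is to check that each Reidemeister move R1, R2, R3 on the original diagram, performed locally, changes the output petal word by a sequence of $\T^\pm$ and $\X$ moves. R1 should correspond to a trivial petal addition, because a kink collapses to the two adjacent consecutive heights $m,m+1$. R2 should correspond to a crossing exchange of two nested pairs. R3 is the hard case. After the collapse it becomes a reordering of three strands inside the übercrossing, and I would try to realize it as a composite of two or three crossing exchanges, possibly after a few trivial petal additions that make room for the nesting condition. Finally, any petal word $p$ is itself a diagram whose algorithmic output is $p$ up to the moves. Combining this with Reidemeister's theorem, two petal words of the same knot are then connected.

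I expect the main obstacle to be the combinatorial bookkeeping. Each Reidemeister move, and each change of choice, must yield pairs $\Delta,\Delta'$ satisfying the strict nesting hypothesis on $[m+2,w-1]$. When that hypothesis fails, one must first insert trivial petals to separate the interfering heights, and the R3 case is where I would expect this to be needed.
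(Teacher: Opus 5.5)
Your top-level architecture is the right one, and it is also the architecture of \cite{CGHS}: a map from diagrams to petal words, independence of the choices up to $\T^\pm$ and $\X$, invariance under Reidemeister moves up to $\T^\pm$ and $\X$, and then Reidemeister's theorem. However, both substantive steps are only asserted with ``should''. The vehicle you chose makes them ill-posed, not merely unfinished.
\begin{itemize}
\item The construction you describe passes through \"{u}bercrossing projections with nested loops. These are not petal diagrams, and their knot type is not determined by a height permutation alone. Since $\T^\pm$ and $\X$ are defined only on petal words, ``each twist is encoded as a sequence of $\T^\pm$ and $\X$ moves'' has no meaning as stated.
\item The choices in that construction (how crossings are dragged into the multi-crossing, in which order, which twists) come with no elementary relation that would let you compare two outputs ``choice by choice''.
\item The fact that $\X$ is defined for every position $b$ does not by itself show that a change of basepoint alters the output only by moves.
\item Stem diagrams are not ``equivalent to \"{u}bercrossing diagrams with a marked order of strands''. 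A stem diagram is a classical diagram $D$ with an axis $\alpha$ and a basepoint $s$ such that, travelling from $s$, crossings on the left of $\alpha$ are first met on the under-strand and crossings on the right on the over-strand. The petal diagram appears when the embedding is viewed along $\alpha$.
\end{itemize}

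The missing idea is what makes \cite{CGHS} work. With stem diagrams the only choices are $\alpha$ and $s$, and the whole difficulty is concentrated in one statement: changing the axis and the basepoint is realized by $\T^\pm$ and $\X$. Here $\Tp$ is a finger move pushing an arc of $D$ across $\alpha$. $\X$ inserts two crossings near $\alpha$ between nested strands, and with a Reidemeister~II move it slides a crossing from levels $m,m+1$ to $w,w+1$.

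Reidemeister moves then need not be translated into petal moves at all. For each move of Polyak's oriented generating set \cite{Polyak}, one chooses $\alpha$ and $s$ so that the petal word is literally unchanged. Orientation matters here, because the stem condition refers to the direction of travel. In all cases except one subcase of $\Omega3$ the axis is placed away from the move. In that subcase different axes are used before and after, and the discrepancy is absorbed by the axis-change statement.

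Your dictionary R1$\mapsto\Tp$, R2$\mapsto\X$, R3$\mapsto$ a composite of $\X$'s is unsupported. Nothing guarantees that, after a global algorithm, a local kink yields two cyclically adjacent consecutive letters. The hypothesis of $\X$ constrains \emph{every} other pair in $P_b$, which a local change of the diagram does not control. Your R3 step (``I would try to realize it\dots'') is exactly where a proof is required. Finally, your last step, that the algorithm returns $p$ on a diagram of $p$, is essentially immediate for stem diagrams: project $K(p)$ onto a plane containing the binding. For the construction of \cite{Adams1} it is one more unproved claim.
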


The proof passes through \emph{stem diagrams}. A stem diagram is a classical oriented diagram $D$ together with a properly embedded line $\alpha$ (the \emph{axis}) splitting the plane into a left side $L$ and a right side $R$, and a basepoint $s\in D\cap\alpha$. These are required to satisfy the following condition: travelling from $s$, every crossing in $L$ is first met along its under-strand, and every crossing in $R$ along its over-strand. Viewing the corresponding embedding from the direction of the axis gives a petal diagram whose petals correspond to the arcs of $D$ cut along $\alpha$ \cite[Lemma~2]{CGHS}. In our language, $\alpha$ lifts to the binding of the arc presentation.

The moves are then interpreted as follows.
\begin{itemize}[leftmargin=2em]
\item $\T^+$ is a finger move that pushes an arc of $D$ across the axis along a path $\gamma$, performing Reidemeister~II moves where $\gamma$ meets $D$ \cite[Fig.~8]{CGHS}.
\item $\X$ inserts two crossings near the axis between two nested strands on the same side \cite[Fig.~9]{CGHS}. Followed by a Reidemeister~II move, it slides a crossing between the strands at levels $m,m+1$ to the strands at levels $w,w+1$ \cite[Fig.~11]{CGHS}.
\end{itemize}
The proof then has two steps. First, changes of the axis and of the basepoint are realized by $\T^\pm$ and $\X$ \cite[Lemmas~7, 8 and Prop.~9]{CGHS}. Second, for each move of Polyak's minimal generating set $\{\Omega1a,\Omega1b,\Omega2a,\Omega3a\}$ of oriented Reidemeister moves \cite{Polyak}, one chooses axes and basepoints before and after the move so that the petal permutation is unchanged \cite[Lemma~10]{CGHS}. For $\Omega1$, $\Omega2$ and one subcase of $\Omega3$ the axis is chosen away from the move. For the other subcase of $\Omega3$, different axes are used before and after.

\begin{remark}
Results A--C do not rely on Theorem~\ref{thm:CGHS}. They use only that each move in $\M$ preserves the knot type. For $\T^\pm$ and $\X$ this is the easier half of \cite{CGHS}, proved there in Corollary~5 and Lemma~6; for $\Rot,\Sym$ it was shown above. The completeness statement is used only to guarantee that untangling sequences exist at all, and our explicit sequences make even this unnecessary for the examples.
\end{remark}

\section{Pages in arc presentations, braids and petal diagrams}\label{sec:pages}

All the diagram types compared here can be viewed in the standard open book of $S^3$: the binding is an unknotted circle $\ell$, and the pages are disks bounded by $\ell$. They differ in how the knot sits relative to the pages, and this determines which moves are available.

\subsection{Classical diagrams}
Classical diagrams carry no open book structure. Reidemeister moves change the crossing number by $0$ or $\pm1$ (move I) or $\pm 2$ (move II), and a diagram of the unknot is \emph{hard} if every sequence of Reidemeister moves to the trivial diagram must at some point increase the crossing number. The classical example is due to Goeritz \cite{Goeritz}; further early examples of non-trivial projections of the trivial knot were given by Ochiai \cite{Ochiai}. Kauffman and Lambropoulou \cite{KL,KL2} constructed infinite families of hard unknots from rational tangles, identified the smallest ones, and introduced the more general notion of hard diagrams of arbitrary knots and links. Henrich and Kauffman \cite{HK} gave upper bounds on the number of crossings that may be needed in intermediate diagrams, using Dynnikov's theorem and Lackenby's bound on Reidemeister moves; Jab{\l}onowski \cite{Jab} determined minimal hard diagrams of classical unlinks. Burton et al.\ \cite{BCLMMSSS} exhibit unknot diagrams requiring at least three extra crossings in $S^2$, and observed that many ``standard'' hard unknots need only one. Using reinforcement learning, Applebaum et al.\ \cite{ABDEJLTZ} found millions of hard unknot diagrams, and found that several examples from the literature are not hard in the strict sense. Lunel, de Mesmay and Spreer \cite{LMS} constructed hard diagrams of split links. Cantarella and Schumacher \cite{CS} showed that many hard unknots become easy after a change of projection, a phenomenon related to the fact, exploited in this paper, that the diagram class one restricts to strongly affects the difficulty of untangling. On the quantitative side, the number of Reidemeister moves needed to untangle an unknot diagram is bounded above exponentially by Hass and Lagarias \cite{HL} and polynomially by Lackenby \cite{Lackenby}, and can be at least quadratic by Hass and Nowik \cite{HN}; for the computational complexity of related unknotting problems see \cite{MRST}. In any case no monotonicity holds.

\subsection{Arc presentations and rectangular diagrams}
In an arc presentation \cite{Cromwell} the knot meets $\ell$ in finitely many vertices and each page in at most one arc. Two pieces of data are independent: the cyclic order of the vertices on $\ell$ (rows of the rectangular diagram) and the cyclic order of the occupied pages (columns). The elementary moves \cite{Cromwell,Dynnikov} are the following.
\begin{enumerate}[leftmargin=2em]
\item[(i)] \emph{Exchange of adjacent pages}, allowed when the endpoint pairs of their two arcs do not interleave on $\ell$.
\item[(ii)] \emph{Exchange of adjacent vertices}, allowed when the page pairs of the two arcs at each vertex do not interleave in the cyclic order of pages.
\item[(iii)] \emph{Stabilization and destabilization}, changing the number of arcs by $\pm1$.
\end{enumerate}
In the rectangular picture, (i) and (ii) are the same move with the roles of the two coordinates interchanged.

\begin{theorem}[{Dynnikov \cite{Dynnikov}}]
Every rectangular diagram of the unknot can be transformed into the trivial diagram by a finite sequence of exchange moves and destabilizations. In particular there are no hard rectangular diagrams of the unknot.
\end{theorem}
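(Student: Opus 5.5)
This is Dynnikov's theorem, quoted from \cite{Dynnikov}, so within this paper it is simply invoked. If I had to prove it, I would follow Dynnikov's strategy, which is the Birman--Menasco braid foliation method adapted to arc presentations. Fix a rectangular diagram $R$ of the unknot, viewed as an arc presentation with binding $\ell$ in the open book of $S^3$. Choose a spanning disk $D$ for $K(R)$ and put it in general position with respect to the open book. Then the pages $H_\theta$ induce on $D$ a singular foliation, with saddle tangencies at finitely many pages. In addition, $\ell$ meets $D$ transversally in finitely many vertices: positive and negative ones, interior ones and those on $\partial D$.

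The first step is to simplify $D$ by isotopy rel $\partial D$, without changing $R$, until its foliation has a standard form. Interior vertices of low valence, and leaves that are inessential arcs, are removed by the usual Birman--Menasco surgeries on the disk. After this, every region of the foliation is one of a short list of tiles: saddle-containing rectangles whose corners are vertices on $\ell$ or boundary points of $K$. The second step is an Euler characteristic count on the cell decomposition of $D$ given by the foliation. Since $\chi(D)=1$, this count forces the existence of a special local configuration. Either there is a boundary vertex of valence one, or there is a pair of adjacent saddles of opposite sign, or a similar small pattern.

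The third step is to read off a move on $R$ from each such configuration. A valence-one boundary vertex near the boundary gives a destabilization. Opposite-sign saddle pairs can be cancelled by exchanging the order of two pages. Such an exchange is an exchange move (i) on $R$, or after a sequence of vertex exchanges (ii). None of these moves increases the number of arcs, and each one strictly decreases a complexity: first the number of arcs, then the number of saddles or vertices of the foliation of $D$. Induction then reaches the trivial diagram.

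The main obstacle is the second step: proving that the Euler characteristic count always yields a configuration that can be realised by exchange moves and destabilizations alone. In particular, one must handle the case where the only available simplification of the foliation seems to require moving pages past arcs whose endpoints interleave. For that case I would first try Dynnikov's device of alternating between the roles of rows and columns, which exploits the symmetry of the rectangular picture between moves (i) and (ii).
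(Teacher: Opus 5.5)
Your proposal matches the paper, which also simply invokes Dynnikov's theorem. The paper describes the proof only as a Birman--Menasco-style analysis of the foliation of a spanning disk by the pages of the open book, using both kinds of exchange move in an essential way, which is the strategy you outline; your sketch of it is heuristic rather than a proof, since the case analysis you flag as the main obstacle is the real content of Dynnikov's paper, but the paper needs nothing beyond the citation plus the observation that exchange moves preserve the number of arcs and destabilizations decrease it.
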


The proof uses a spanning disk in general position with respect to the open book and the combinatorics of its foliation, in the tradition of Birman and Menasco \cite{BM}. It uses both kinds of exchange in an essential way.

\subsection{Closed braids}
A closed braid is transverse to all pages and meets each page in the same number $b$ of points. The pages play the role of time, and nothing is attached to the binding. Birman and Menasco \cite{BM} proved that a closed braid representing the unlink can be simplified to the trivial representative by braid isotopy, exchange moves and destabilizations, without increasing the braid index. This is another monotonic simplification theorem in the open book.

\subsection{Other restricted classes of diagrams}
Hardness also depends on the class of diagrams in which one is required to stay. Brugall\'e, Koseleff and Pecker \cite{BKP} studied \emph{trigonal} diagrams (Conway's open form, i.e.\ closures of $3$-strand braid-like diagrams) of two-bridge links, with \emph{slide isotopies} that keep all intermediate diagrams trigonal and never increase the number of crossings. They proved that for the two-bridge torus links and the generalized twist links every trigonal diagram can be simplified in this way to an alternating one. For all other two-bridge links they found \emph{awkward} trigonal diagrams for which this is impossible, in the spirit of the hard diagrams of \cite{KL2}. Petal diagrams provide another restricted class in which hardness occurs, already for the unknot.

\subsection{Petal diagrams}
By Lemma~\ref{lem:petalarc}, a petal diagram is an arc presentation with an odd number $N=2n+1$ of arcs. Its page order is not free: the $j$-th arc along the knot lies in page $c_j=j(n+1)$. Only the order of the vertices on the binding, i.e.\ the petal word, can vary. Thus, compared with general arc presentations, one of the two independent cyclic orders has been frozen and tied to the traversal of the knot.

\begin{proposition}\label{prop:noexchange}
Let $p$ be a petal word with $N=2n+1\ge5$ petals, viewed as the arc presentation $K(p)$.
\begin{enumerate}[leftmargin=2em]
\item[(a)] Let $0\le h\le N-2$. If the letters $h$ and $h+1$ are not cyclically adjacent in $p$, then the page pairs at the vertices of heights $h$ and $h+1$ interleave, so exchanging these vertices is not an exchange move of type (ii). If they are adjacent, then $\Tm$ applies to $p$.
\item[(b)] No page exchange (i), stabilization or destabilization transforms $K(p)$ into a petal arc presentation.
\end{enumerate}
\end{proposition}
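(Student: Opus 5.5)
The plan is to work directly with the page data of $K(p)$ from Construction~\ref{con:arc}. The vertex $v_j$ at height $p_j$ is the common endpoint of $A_{j-1}$ and $A_j$, so its page pair is $\{c_{j-1},c_j\}=\{x,\,x+n+1\}$ with $x=(j-1)(n+1)$. Every vertex therefore carries a ``near-diameter'' chord of the $N$-cycle of pages. The key fact is that $2(n+1)\equiv 1\pmod N$, so $n+1$ is invertible mod $N$. Both parts reduce to elementary statements about such chords and about arithmetic page sequences.

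For (a), let $h$ sit at position $j$ and $h+1$ at position $i\neq j$. Their chords are $\{x,x+n+1\}$ and $\{y,y+n+1\}$ with $y-x\equiv d:=(i-j)(n+1)\pmod N$, where $d\in\{1,\dots,N-1\}$ because $n+1$ is invertible.
\begin{itemize}
\item If $1\le d\le n-1$, then $y$ lies in the open arc $(x,x+n+1)$. Also $y+n+1=x+d+n+1$ lies strictly between $x+n+1$ and $x+N=x$, i.e.\ outside that arc. Hence the chords interleave.
\item If $n+2\le d\le N-1$, the same conclusion follows by symmetry: exchange the roles of the two vertices, replacing $d$ by $N-d$.
\item The remaining values are $d=n+1$ and $d=n\equiv-(n+1)$. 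These give exactly $i-j\equiv\pm1$, i.e.\ $h$ and $h+1$ are cyclically adjacent in $p$. In that case the two chords share a page, and $\Tm$ applies by its definition.
\end{itemize}
So non-adjacency forces interleaving, and the vertex swap is not a type (ii) exchange. The hypothesis $N\ge5$ ensures the two exceptional values $d=n,n+1$ are the only non-interleaving ones and that these cases do not overlap degenerately.

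For (b), stabilization and destabilization change the number of arcs by $\pm1$, from odd $N$ to the even $N\pm1$. A petal arc presentation always has an odd number of arcs, so these moves are excluded by parity alone. For a page exchange, I would characterize petal arc presentations intrinsically: listing arcs in knot order, the page of arc $j+1$ minus the page of arc $j$ is the constant $n+1$ mod $N$. Here pages are taken up to cyclic relabelling, and the constant becomes $-(n+1)$ if orientation or page order is reversed, which covers mirror conventions.

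A page exchange swaps the contents of two adjacent pages $k,k+1$. Since $c_{j+2}=c_j+1$, these contain arcs $A_j$ and $A_{j+2}$. After the swap, the consecutive differences at the four affected steps $j-1\to j$, $j\to j+1$, $j+1\to j+2$, $j+2\to j+3$ become $n+2$ or $n$, rather than $n+1$. All other differences stay $n+1$, and such unaffected steps exist because $N\ge5$ leaves at least one step outside the four. So the difference sequence is not constant, and no relabelling or reversal, which only changes the sequence by a global sign, can make it so. Hence the result is not petal.

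The main obstacle I expect is making ``transforms $K(p)$ into a petal arc presentation'' precise. One must decide which identifications, namely rotation of pages, reversal of page order and reversal of traversal, are allowed, and check that none rescues the non-constant difference sequence. I would fix this by stating the invariant as ``the cyclic sequence of page differences is constant, equal to $\pm(n+1)$'' and verifying its invariance under those identifications first.
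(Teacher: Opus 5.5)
Your proof is correct and follows the paper's argument. Part (a) is the same chord case analysis: $d\in[1,n-1]\cup[n+2,2n]$ gives interleaving, and $d\in\{n,n+1\}$ means $i-j\equiv\pm1$. In (b), your non-constant difference sequence $n+2,n,n,n+2,n+1,\dots$ is the explicit form of the paper's observation that $j\mapsto\tau(c_j)$ would have to be affine, which the paper refutes by noting that an affine bijection of $\Z/N$ fixing $k+2$ and $k+3$ is the identity.

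One small correction: $N\ge5$ plays no role in (a), where the statement holds for every odd $N$. It is needed only in (b), to keep the indices $j-1,\dots,j+3$ distinct; for $N=3$ a page exchange does yield a petal presentation.
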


\begin{proof}
(a) The vertex $v_j$ is the endpoint of the arcs $A_{j-1}$ and $A_j$, in pages $c_{j-1}=c_j+n$ and $c_j$. So its page pair is the chord $\chi_j=\{c_j,\,c_j+n\}$ of the $N$-gon $\Z/N$. Consider two such chords $\{a,a+n\}$ and $\{a+k,a+k+n\}$ with $1\le k\le 2n$.
\begin{itemize}[leftmargin=2em]
\item If $1\le k\le n-1$, then $a+k$ lies strictly inside the arc from $a$ to $a+n$, while $a+k+n$ lies outside it. So the chords interleave.
\item If $n+2\le k\le 2n$, then $a+k$ lies outside that arc, while $a+k+n=a+(k-n-1)$ lies inside. So the chords interleave.
\item If $k\in\{n,n+1\}$, the chords share an endpoint.
\end{itemize}
Since $c_i-c_j=(i-j)(n+1)$, sharing an endpoint happens exactly when $i-j=\pm1$, i.e.\ when the two vertices are consecutive along the knot. If $p_i=h$ and $p_j=h+1$ are consecutive along the knot, the letters $h,h+1$ are adjacent in $p$, and $\Tm$ applies.

(b) Stabilizations and destabilizations change the number of arcs by one, and petal arc presentations have an odd number of arcs. Now suppose a page exchange swaps two adjacent pages $k,k+1$. The pages visited along the knot become $\tau(c_j)$, where $\tau$ is the transposition $(k\ k{+}1)$ of $\Z/N$. For the result to be a petal arc presentation, for some starting point and orientation, $j\mapsto\tau(c_j)$ would have to be an affine function $j\mapsto u+ js$ with $s\in\{n,n+1\}$. Then $\tau$ itself would be affine. But an affine bijection of $\Z/N$ fixing the two points $k+2,k+3$ (distinct from $k,k+1$ because $N\ge5$) is the identity, since they differ by the unit $1$. This contradicts $\tau\neq\mathrm{id}$.
\end{proof}

Thus no elementary move of Cromwell and Dynnikov ever connects two different petal arc presentations of the same size, except the swap of two letters that are adjacent in the word, which factors through $\Tm$ and $\Tp$. In particular, a crossing exchange is a composite of two vertex swaps ($m\leftrightarrow m+1$ and $w\leftrightarrow w+1$), neither of which is an exchange move. The freedom used in Dynnikov's argument is therefore unavailable. Petal-number-preserving moves are scarce, and in the examples of Result~B there are none at all.

\begin{table}[t]
\centering
\small
\begin{tabular}{@{}p{2.2cm}p{2.1cm}p{3.1cm}p{2.1cm}p{3.1cm}@{}}
\toprule
 & classical & arc pres./rectangular & closed braid & petal \\
\midrule
binding & --- & meets $K$ in vertices & disjoint from $K$ & meets $K$ in $2n+1$ vertices\\
pages & --- & $\le 1$ arc each; order free & $b$ points each & $1$ arc each; order forced \\
free data & planar diagram & vertex order, page order & braid word & vertex order only \\
complexity & crossings & arcs & strands $b$ & petals $2n+1$ \\
monotone for unknot & no & yes \cite{Dynnikov} & yes \cite{BM} & no (Res.~B) \\
\bottomrule
\end{tabular}
\caption{The role of the open book for four kinds of diagrams.}
\label{tab:pages}
\end{table}

\section{The computation}\label{sec:computation}

\subsection{Reduction to connected components}
Fix $N$ and consider the graph $G_N$ whose vertices are the $(N-1)!$ petal words with $N$ petals and whose edges are the moves $\X,\Rot,\Sym$. Its connected components are closed under all petal-number-preserving moves of $\M$.

\begin{lemma}\label{lem:criterion}
Suppose that no unknot word with fewer than $N$ petals is $\M$-hard. Then an unknot word $p$ with $N$ petals is $\M$-hard if and only if no word in the component of $p$ in $G_N$ admits a move $\Tm$.
\end{lemma}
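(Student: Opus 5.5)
We prove the two directions separately, using only Definition~\ref{def:hard} and the fact that the moves in $\M$ change the number of petals in a controlled way: $\X$, $\Rot$ and $\Sym$ preserve it, while $\T^{\pm}$ changes it by $\pm2$. Let $C$ be the component of $p$ in $G_N$.

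For ``if'' we argue by contrapositive. Suppose $p$ is not $\M$-hard, and take a sequence of moves of $\M$ from $p$ to $\word{0}$ all of whose intermediate words have at most $N$ petals. If $N=1$ then $p=\word{0}$ is trivially not hard, so assume $N\ge3$. The sequence must reach a word with fewer than $N$ petals. Consider the first step that leaves the set of $N$-petal words. Before that step every move preserves $N$, so it is an $\X$, $\Rot$ or $\Sym$ move, and hence every word up to that point lies in $C$. The step itself cannot be $\Tp$, since that would give $N+2$ petals. So it is a $\Tm$ applied to some word of $C$. This shows that if no word of $C$ admits $\Tm$, then $p$ is hard. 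Note that this direction does not need the hypothesis.

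For ``only if'' suppose some $q\in C$ admits $\Tm$, and let $q'$ be the resulting word with $N-2$ petals. We build a path in three pieces:
\begin{itemize}
\item from $p$ to $q$ inside $C$, by edges of $G_N$, so every word has $N$ petals;
\item the move $\Tm$ from $q$ to $q'$;
\item a monotone path from $q'$ to $\word{0}$.
\end{itemize}
For the third piece, note that $q'$ represents the unknot, because every move preserves knot type. By hypothesis $q'$ is not $\M$-hard, so there is a sequence from $q'$ to $\word{0}$ through words with at most $N-2$ petals. Concatenating the three pieces gives a sequence from $p$ to $\word{0}$ through words with at most $N$ petals, so $p$ is not $\M$-hard.

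There is no real obstacle in either direction. The only points to state carefully are that the edges of $G_N$ are exactly the petal-number-preserving moves of $\M$, and that $\T^{\pm}$ changes the petal count by exactly $2$. The first point is what guarantees that the prefix in the first direction stays inside $C$.
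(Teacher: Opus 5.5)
Your argument is correct and is the same as the paper's. A sequence that never exceeds $N$ petals stays in the component of $p$ until its first $\Tm$, and conversely a path inside the component, one $\Tm$, and the hypothesis applied to the resulting $(N-2)$-petal unknot word give a monotone untangling; you just spell out the details (such as $\Tp$ being forbidden) more carefully.

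One small point: you describe $N=1$ as trivial, but there the statement is actually false, since $\word{0}$ is not hard yet no word in its component admits $\Tm$. That case should be excluded by restricting to $N\ge 3$, as the paper implicitly does.
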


\begin{proof}
A sequence of moves starting at $p$ that never exceeds $N$ petals stays in the component of $p$ until it first applies $\Tm$. If some word in the component admits $\Tm$, we reach a word with $N-2$ petals, which can then be untangled by hypothesis without exceeding $N-2$ petals.
\end{proof}

For $N=3$ every word admits $\Tm$, since all pairs of letters are adjacent. By induction it therefore suffices, for each $N$, to compute the components of $G_N$ (by union--find) and the knot types of the components containing no word that admits $\Tm$.

\subsection{Certification of knot types}
For a word $p$ we form the grid diagram of Remark~\ref{rem:grid} and its planar diagram code. The knot type is constant on components, so it suffices to certify it for one representative of each component.
\begin{itemize}[leftmargin=2em]
\item \emph{Unknots} are certified by SnapPy's \cite{SnapPy} diagram simplification. This applies Reidemeister moves and terminates in a diagram with no crossings, which is a proof of unknottedness.
\item \emph{Knotted} components are certified by a non-trivial Alexander polynomial. We compute it independently of SnapPy, from the Wirtinger presentation of the simplified diagram by Fox calculus, as a minor of the Alexander matrix.
\end{itemize}
For $N\le 9$ every component falls into one of these two cases. As sanity checks, the code reproduces the trefoil and figure-eight examples of \cite{CGHS} and their crossing exchange of \cite[Fig.~4]{CGHS}. We also tested, on more than $1200$ random words with $7$, $9$ and $11$ petals, that $\T^\pm$, $\X$ and $\Rot$ preserve the knot type, as they must.

\subsection{Excess}
To bound the excess we search the graph of all words reachable from a given word by moves of $\M$ through words with at most $L$ petals, for $L=9,11,13$. Words with fewer petals are explored first. Whenever the search terminates without reaching a word with at most $7$ petals, the reachable set is finite and closed, and the excess exceeds $L-9$. When it reaches such a word, we complete the path by the monotone procedure of Result~A and record it.

The ancillary file \texttt{petal\_hard.py} reproduces every number in this paper in about two minutes on a laptop (Python~3, SnapPy, SymPy).

\section{Results}\label{sec:results}

\begin{table}[t]
\centering
\begin{tabular}{@{}rrrrrr@{}}
\toprule
$N$ & words & components of $G_N$ & unknot words & unknot components & $\M$-hard words\\
\midrule
3 & 2 & 2 & 2 & 2 & 0\\
5 & 24 & 7 & 22 & 5 & 0\\
7 & 720 & 50 & 562 & 28 & 0\\
9 & 40320 & 1443 & 24992 & 579 & 108\\
\bottomrule
\end{tabular}
\caption{Petal words with $N$ petals (up to cyclic rotation). All knot types are certified as described in Section~\ref{sec:computation}.}
\label{tab:counts}
\end{table}

Table~\ref{tab:counts} summarizes the enumeration. For $N=5$ and $N=7$ every component of the graph generated by $\X$ alone, without $\Rot$ and $\Sym$, that represents the unknot contains a word admitting $\Tm$. Together with Lemma~\ref{lem:criterion} applied to this smaller move set, this establishes Result~A. For $N=9$, Lemma~\ref{lem:criterion} and Table~\ref{tab:counts} give the $108$ hard words of Result~B. They are precisely the unknot components of $G_9$ consisting of a single $\langle\Rot,\Sym\rangle$-orbit of size $18$ with no applicable $\X$ or $\Tm$. Their orbit representatives are
\[
\begin{array}{lll}
\alpha=\word{0\,2\,4\,1\,5\,7\,3\,8\,6}, & \beta=\word{0\,2\,5\,3\,8\,1\,7\,4\,6}, & \gamma=\word{0\,2\,4\,7\,3\,8\,6\,1\,5},\\[2pt]
\bar\alpha=\word{0\,2\,6\,1\,8\,4\,7\,5\,3}, & \bar\beta=\word{0\,2\,6\,4\,7\,1\,8\,5\,3}, & \bar\gamma=\word{0\,2\,6\,1\,7\,5\,3\,8\,4},
\end{array}
\]
where the bar denotes the mirror image (word reversal). For instance, in $\gamma$ the cyclically adjacent differences are $2,2,3,4,5,2,5,4,5$, none of them $\pm1$ or $\pm 8$. One checks by hand from the definition that no basepoint position $b$ yields pairs with value sets $\{m,w+1\}$ and $\{m+1,w\}$ satisfying the nesting condition.

\begin{remark}\label{rem:TX}
For the smaller move set $\{\T^\pm,\X\}$ of \cite{CGHS}, the same computation finds $264$ hard unknot words with $9$ petals, in $222$ components of the graph generated by $\X$. Of these words, $180$ admit no crossing exchange at all. Result~A holds for this move set as well.
\end{remark}

\subsection{Untangling sequences}
The following sequences prove the upper bounds in Result~C. Each line shows a word, in its lexicographically minimal cyclic rotation, followed by the move applied to it. $\Tp(j;\,a\,b)$ inserts the letters $a\,b$ after position $j$ (after raising letters $\ge\min(a,b)$ by $2$). $\Tm(a\,b)$ deletes the adjacent letters $a,b$. $\X(b,m,w)$ is as in Section~\ref{sec:moves}. Every word in these sequences is certified to be an unknot. None of the sequences uses $\Rot$ or $\Sym$.

\medskip
\noindent\emph{The word $\alpha$ (maximum $11$ petals).}
\[
\begin{array}{ll}
\word{0\,2\,4\,1\,5\,7\,3\,8\,6} & \Tp(6;\,5\,6)\\
\word{0\,2\,4\,1\,7\,9\,3\,5\,6\,10\,8} & \X(10,6,9)\\
\word{0\,2\,4\,1\,6\,10\,3\,5\,7\,9\,8} & \Tm(9\,8)\\
\word{0\,2\,4\,1\,6\,8\,3\,5\,7} & \X(5,0,6)\\
\word{0\,7\,8\,3\,5\,6\,1\,2\,4} & \Tm(7\,8)\\
\word{0\,3\,5\,6\,1\,2\,4} & \Tm(5\,6)\\
\word{0\,3\,1\,2\,4} & \Tm(1\,2)\\
\word{0\,1\,2} & \Tm(0\,1)\\
\word{0} &
\end{array}
\]

\noindent\emph{The word $\beta$ (maximum $11$ petals).}
\[
\begin{array}{ll}
\word{0\,2\,5\,3\,8\,1\,7\,4\,6} & \Tp(1;\,6\,7)\\
\word{0\,2\,6\,7\,5\,3\,10\,1\,9\,4\,8} & \X(2,4,7)\\
\word{0\,2\,6\,8\,4\,3\,10\,1\,9\,5\,7} & \Tm(4\,3)\\
\word{0\,2\,4\,6\,8\,1\,7\,3\,5} & \X(8,3,6)\\
\word{0\,2\,3\,7\,8\,1\,6\,4\,5} & \Tm(2\,3)\\
\word{0\,5\,6\,1\,4\,2\,3} & \Tm(5\,6)\\
\word{0\,1\,4\,2\,3} & \Tm(0\,1)\\
\word{0\,1\,2} & \Tm(0\,1)\\
\word{0} &
\end{array}
\]

\noindent\emph{The word $\gamma$ (maximum $13$ petals).}
\[
\begin{array}{ll}
\word{0\,2\,4\,7\,3\,8\,6\,1\,5} & \Tp(2;\,6\,7)\\
\word{0\,2\,4\,6\,7\,9\,3\,10\,8\,1\,5} & \Tp(6;\,7\,8)\\
\word{0\,2\,4\,6\,9\,11\,3\,7\,8\,12\,10\,1\,5} & \X(10,8,11)\\
\word{0\,2\,4\,6\,8\,12\,3\,7\,9\,11\,10\,1\,5} & \Tm(11\,10)\\
\word{0\,2\,4\,6\,8\,10\,3\,7\,9\,1\,5} & \X(10,3,6)\\
\word{0\,2\,3\,7\,8\,10\,4\,6\,9\,1\,5} & \Tm(2\,3)\\
\word{0\,5\,6\,8\,2\,4\,7\,1\,3} & \Tm(5\,6)\\
\word{0\,6\,2\,4\,5\,1\,3} & \Tm(4\,5)\\
\word{0\,4\,2\,1\,3} & \Tm(2\,1)\\
\word{0\,2\,1} & \Tm(2\,1)\\
\word{0} &
\end{array}
\]

To illustrate how the moves are checked, consider the third line of the sequence for $\gamma$. With $b=10$ the pairs are formed by positions $\{11,12\},\{0,1\},\{2,3\},\{4,5\},\{6,7\},\{8,9\}$, with value sets $\{1,5\},\{0,2\},\{4,6\},\{9,11\},\{3,7\},\{8,12\}$. For $m=8$, $w=11$ we have $\Delta=\{8,12\}$ and $\Delta'=\{9,11\}$, and all other value sets are disjoint from $[10,10]$. Interchanging $8\leftrightarrow9$ and $11\leftrightarrow12$ gives the fourth line.

For the lower bound for $\gamma$ in Result~C, the search through words with at most $11$ petals terminates with a closed set of $3186$ words: the $18$ words of the orbit of $\gamma$ and $3168$ words with $11$ petals. Every trivial petal deletion from any of these $11$-petal words lands back in the orbit of $\gamma$. Hence $e_\M(\gamma)=4$, and the same holds for its orbit and, by Remark~\ref{rem:mirror}, for the orbit of $\bar\gamma$. For $\alpha$ and $\beta$ hardness gives $e_\M\ge2$, and the sequences give $e_\M\le2$. This completes the verification of Result~C.

\begin{corollary}\label{cor:robust}
Let $\M'$ be any set of moves each of which is a composite of moves in $\M$ passing only through words with at most $N+2$ petals, where $N$ is the number of petals at the start of the composite. Then $\gamma$ is $(\M\cup\M')$-hard.
\end{corollary}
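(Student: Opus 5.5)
The plan is to reduce the corollary to the closed-set statement of Result~C. Let $S$ denote the set of $3186$ words reachable from $\gamma$ by moves of $\M$ through words with at most $11$ petals. Result~C says $S$ is closed under this restricted reachability, and that every word in $S$ has $9$ or $11$ petals. I will show that any sequence of moves in $\M\cup\M'$ starting at $\gamma$ and never exceeding $9$ petals stays inside the $9$-petal part of $S$. Since $\word{0}$ has one petal and is not in $S$, this proves that $\gamma$ is $(\M\cup\M')$-hard.

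The argument is an induction on the length of the sequence. Suppose the current word $q$ lies in $S$ and has at most $9$ petals. Since $S$ contains no word with fewer than $9$ petals, $q$ has exactly $9$. There are two cases for the next move.
\begin{itemize}[leftmargin=2em]
\item If the move is in $\M$, the resulting word has at most $9$ petals by assumption. It is reached from $q$ by a single move through words with at most $11$ petals, so by closure it lies in $S$.
\item If the move is in $\M'$, it is a composite of moves of $\M$ whose intermediate words have at most $N+2=11$ petals, where $N=9$ is the petal number of $q$. Each intermediate word is therefore reachable from $q$, hence from $\gamma$, through words with at most $11$ petals. By closure every intermediate word lies in $S$, and so does the endpoint.
\end{itemize}
In both cases the endpoint has at most $9$ petals by hypothesis and lies in $S$, so it has exactly $9$ petals. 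This completes the induction.

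The main obstacle is being precise about what ``closed'' means in Result~C. It must mean that the reachable set is saturated: every move of $\M$ applied to a word of $S$ that lands at $\le 11$ petals lands back in $S$. This is exactly what the exhaustive breadth-first search in Section~\ref{sec:computation} certifies, since the search terminated.

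A second subtlety is the base $N$ of the composite. The bound $N+2$ is measured from the start of each composite. Because the induction keeps every starting point at exactly $9$ petals, every composite stays within $11$ petals. This is why the lower bound ``excess $>2$'' from Result~C suffices.

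Finally, the same argument applies verbatim to the orbit of $\gamma$. By Remark~\ref{rem:mirror} it also applies to $\bar\gamma$, though the statement only asks for $\gamma$.
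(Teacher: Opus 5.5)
Your proof is correct and matches the paper's argument: a sequence of $\M\cup\M'$-moves from $\gamma$ through words with at most $9$ petals expands to a sequence of $\M$-moves through words with at most $11$ petals, and by Result~C such a sequence never leaves the closed set of $3186$ words, none of which has fewer than $9$ petals. Your induction only makes this expansion explicit, and the ``base $N$'' subtlety resolves itself even more simply than you say: each composite starts at a word with at most $9$ petals, so it stays within $11$ petals regardless.
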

\begin{proof}
A sequence of moves of $\M\cup\M'$ from $\gamma$ through words with at most $9$ petals expands to a sequence of moves of $\M$ through words with at most $11$ petals. By Result~C such a sequence never leaves the closed set described there.
\end{proof}

In particular, the composite ``add a trivial petal, perform any number of crossing exchanges, delete a trivial petal'', viewed as a single petal-number-preserving move, removes the hardness of $\alpha$ and $\beta$ but not of $\gamma$.

\section{Questions}\label{sec:questions}

\begin{question}
Is the excess of unknot petal diagrams bounded by a function of the number of petals, and how fast must such a function grow? Does the maximal excess over $N$-petal diagrams tend to infinity?
\end{question}

\begin{question}
Is there a natural finite collection of \emph{local} petal-number-preserving moves, beyond crossing exchanges, with respect to which petal diagrams of the unknot admit monotonic simplification, in the spirit of Dynnikov's theorem? By Corollary~\ref{cor:robust}, such moves cannot all be composites passing through only two additional petals.
\end{question}

\begin{question}
How common are hard diagrams? With $9$ petals, $108$ of the $24992$ unknot words (about $0.43\%$) are $\M$-hard. What is the proportion for larger $N$, for instance in the Petaluma model \cite{EHLN1,EHLN2}?
\end{question}

\begin{question}
For a non-trivial knot $K$, are there petal diagrams of $K$ that cannot be reduced to a diagram with $p(K)$ petals without first increasing the number of petals? Dynnikov's theorem has no analogue of this kind for arbitrary knots in the rectangular setting either, so the question is whether petal diagrams behave even worse.
\end{question}

\subsection*{Acknowledgements}
The computations used SnapPy \cite{SnapPy} and SymPy. Claude AI was used to help with writing Python code and a draft of the paper.

\end{document}